\newtheorem{theorem}{Theorem}[section]
\newtheorem{lemma}[theorem]{Lemma}
\newtheorem{setup}[theorem]{Set-up}
\theoremstyle{definition}
\newtheorem{definition}[theorem]{Definition}
\newtheorem{example}[theorem]{Example}
\theoremstyle{remark}
\newtheorem{remark}[theorem]{Remark}
\newtheorem{question}[theorem]{Question}
\numberwithin{equation}{section}
\renewcommand{\b}[1]{\mathbf{#1}}
\newcommand{\sX}{\mathcal{X}}
\newcommand{\sY}{\mathcal{Y}}
\newcommand{\sA}{\mathcal{A}}
\newcommand{\bL}{\mathbb{L}}
\begin{document}

% \title[short text for running head]{full title}
\title[Formal deformations of algebraic spaces...]{Formal deformations of algebraic spaces and generalizations of the motivic Igusa-zeta function.}

%    Only \author and \address are required; other information is
%    optional.  Remove any unused author tags.

%    author one information
% \author[short version for running head]{name for top of paper}
\author{Andrew R. Stout}
\address{Borough of Manhattan Community College, CUNY}
\curraddr{199 Chambers Street\\ New York, NY 10007}
\email{astout@bmcc.cuny.edu}
\thanks{Support for this project was provided by a PSC-CUNY Award (PSC-Grant Traditional B, \# 60784-00 48), jointly funded by the Professional Staff Congress and The City University of New York.}

%    author two information

\subjclass[2010]{Primary 14H20, Secondary 14H50, 14E18.}
%    The 2010 edition of the Mathematics Subject Classification is
%    now available.  If you are citing a classification from the
%    new scheme, use the following input coding instead.
%\subjclass[2010]{Primary }

\date{}

\begin{abstract}
We generalize the notion of the auto-Igusa zeta function to formal deformations of algebraic spaces. By incorporating data from all algebraic transformations of local coordinates, this function can be viewed as a generalization of the traditional motivic Igusa zeta function. Furthermore, we introduce a new series, which we term the canonical auto-Igusa zeta function, whose coefficients are given by the quotient stacks formed from the coefficients of the auto-Igusa zeta function modulo change of coordinates. We indicate the current state of the literature on these generalized Igusa-zeta functions and offer directions for future research.
\end{abstract}

\maketitle

%    Text of article.

\section{Introduction} 

Progress has been made recently in understanding certain motivic generating series associated to the germ of a variety. Originally introduced by Schoutens in \cite{sch1} and \cite{sch2}, the auto-Igusa zeta series is a motivic generating series whose coefficients are determined by the algebra automorphisms of the local ring corresponding to a point on a variety. First calculations of this series were completed by Schoutens and secondary calculations were carried out by the author of this paper with the use of computer algebra software in \cite{STOUT2017156}. This led to several conjectures concerning the structure of the auto-Igusa zeta function. The author later confirmed that these conjectures are indeed true in \cite{stout2} for plane curve singularities, yet the veracity of these conjectures for broader classes of singularities remains unknown. 

In this paper, the author generalizes the auto-Igusa zeta function to stacks.  However, we choose to frame almost all of the work in the language of algebraic spaces, as there is a bottleneck for Artin stacks related to the occasional lack of representability of the hom functor for general Artin stacks. The representability results are discussed in \S \ref{rep}. 

One notable development in this paper concerns the introduction of a new type of moduli stack, which takes into account the canonical group action on the space of endormophism by conjugation of all automorphisms. We call this the moduli stack of Jordan-Normal forms.  This space is in many ways more natural than the space of all endomrophism (from which the auto-Igusa zeta series is formed), yet it is unclear to the author  under what conditions this moduli stack is itself an algebraic space or even when it is an Artin stack. Regardless, one may form a generating series from this stack, which we then term the canonical auto-Igusa zeta function. 

Another important development in this paper is that it is shown in \S\ref{gen} that both the canonical  zeta function and the auto-Igusa zeta function may both be regarded as generalizations of the motivic Igusa zeta function, which has been studied by Denef, Loeser, Cluckers, Nicaise, Mustata, and many others. It is hoped that the motivic Igusa zeta series would offer a key to proving the monodromy conjecture first introduced by Igusa. However, as of yet, the general conjecture remains unproven. 

In \S\ref{last}, the author rephrases his work in \cite{stout2} in this new context and offers possible directions for future research. As mentioned at the beginning of this section, the possible rationality of the auto-Igusa zeta function for varieties (and other conjectures) other than plane curves remains mostly elusive to direct proof. It is the hope of the author that the work carried out in this paper will offer progress toward such a proof as well as possible keys to further understanding of local zeta functions in general.

\section{Representability Results Concerning Hom} \label{rep}

Let $\b{C}$ be a site. It follows from the Yoneda lemma that the Grothendieck topos $\tau = \mbox{Sh}(\b{C})$ is a locally cartesian closed monoidal category. In particular, given two objects $\sX$ and $\sY$ of $\tau$, we have a presheaf which sends an object $U$ of $\b{C}$ to the set $\mbox{Hom}_{\tau}(\sX\otimes U , \sY\otimes U)$, and, moreover, this presheaf is in fact a sheaf -- i.e., it is an object of $\tau$, which we denote by $\underline{\mbox{Hom}}_{\tau}(\sX,\sY)$.  In other words, there is a natural bijection 
\begin{equation}
\mbox{Hom}_{\tau}(\sX\otimes U , \sY\otimes U) \xrightarrow{\sim} \mbox{Hom}_{\tau}(U,\underline{\mbox{Hom}}_{\tau}(\sX,\sY)).
\end{equation}

In fact, a similar result holds in higher category theory--i.e., if $\tau$ is an $(\infty,1)$-topos,  
then $\tau$ is a locally cartesian closed monoidal $(\infty,1)$-category. This follows from the $(\infty,1)$-Yoneda lemma (cf. Proposition 5.1.3.1 of \cite{HTT}). In particular, if $\sX$ and $
\sY$ are two stacks over an algebraic space $S$, then one defines $\underline{\mbox{Hom}}_{S}(\sX, \sY)
$ to be the fibered category of groupoids over the site of $S$-schemes $(\b{Sch}/S)_J$ with a 
given Grothendieck topology $J$, which to any obect $U\to S$ of  $(\b{Sch}/S)_J$ associates the 
groupoid of functors from $\sX\times_S U$ to  $\sY\times_S U$ over $U$, and, moreover, this fibered category 
$\underline{\mbox{Hom}}_{S}(\sX, \sY)$ is in fact a stack. It is possible to show that $\underline{\mbox{Hom}}_{S}(\sX, \sY)$ is an Artin stack (or, Deligne-Mumford stack, or algebraic space) depending on certain conditions on $\sX$ and $\sY$ (cf. Theorem 1.1 of \cite{olsson2006}). For example, we have the following result due to Artin. 

\begin{theorem} If $S$ is a locally Noetherian algebraic space and if $X$ and $Y$ are algebraic spaces over $S$ with $X\to S$ proper and flat and $Y\to S$ is separated and of finite type, then $\underline{\mbox{Hom}}_{S}(X, Y)$ is a separated algebraic space locally of finite type over $S$.
\end{theorem}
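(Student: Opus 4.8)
\emph{Plan.} The plan is to first reinterpret $F := \underline{\mbox{Hom}}_{S}(X,Y)$ geometrically via graphs and then represent it by applying Artin's representability theorem. To begin with, $F$ is a \emph{sheaf of sets} on $(\b{Sch}/S)_J$: morphisms of algebraic spaces carry no nontrivial automorphisms, and they glue along $J$-covers of the source by descent. For an $S$-scheme $T$ and $f\in F(T)=\mbox{Hom}_{T}(X_T,Y_T)$, the graph $\Gamma_{f}=(\mathrm{id}_{X_T},f)\colon X_T\to X_T\times_T Y_T$ is a closed immersion precisely because $Y\to S$, hence $Y_T\to T$, is separated; its image $Z_f\subseteq (X\times_S Y)_T$ is flat and proper over $T$ (being isomorphic to $X_T$) and the first projection $Z_f\to X_T$ is an isomorphism. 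Conversely any closed subscheme $Z\subseteq (X\times_S Y)_T$ whose first projection to $X_T$ is an isomorphism equals $Z_f$ for $f=\mathrm{pr}_{2}\circ(Z\xrightarrow{\sim}X_T)^{-1}$, and such a $Z$ is automatically flat and proper over $T$ since $X\to S$ is. So $F$ is the subfunctor of the Hilbert functor $\mathcal{H}:=\underline{\mathrm{Hilb}}^{\,\mathrm{prop}}_{(X\times_S Y)/S}$ (classifying proper flat closed subschemes) consisting of those $[Z]$ for which $Z\to X_T$ is an isomorphism.

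From here I see two parallel routes. \emph{(i) Via the Hilbert functor.} Invoke Artin's theorem that the proper Hilbert functor of a separated algebraic space locally of finite type over $S$ is an algebraic space locally of finite presentation over $S$, so $\mathcal{H}$ is such a space; then observe that ``$Z\to X_T$ is an isomorphism'' cuts out an \emph{open} subfunctor of $\mathcal{H}$, by the standard openness result for finitely presented proper morphisms (the locus over which $Z\to X_T$ has trivial fibres is open, and there $Z\to X_T$ is an isomorphism), so $F$ is an open subspace of $\mathcal{H}$. \emph{(ii) Directly, via Artin's criterion}, which is the route I would take. I must check: $F$ is a $J$-sheaf (above); $F$ is limit-preserving, because $X$ and $Y$ are of finite presentation over $S$ — finite type equals finite presentation over the locally Noetherian $S$ — so $\mbox{Hom}_{A}(X_A,Y_A)=\varinjlim_i\mbox{Hom}_{A_i}(X_{A_i},Y_{A_i})$ for filtered colimits $A=\varinjlim A_i$; $F$ carries a deformation and obstruction theory at $[f\colon X_{A_0}\to Y_{A_0}]$ whose groups are, for a finite $A_0$-module $M$, cohomology groups on the \emph{proper} $A_0$-scheme $X_{A_0}$ built from $Lf^{*}L_{Y_{A_0}/A_0}$ and $M$ (when $Y\to S$ is smooth, simply $H^{0}$ and $H^{1}$ of $f^{*}T_{Y/S}\otimes_{A_0}M$), and these are finite $A_0$-modules compatible with base change in $M$ precisely because $X\to S$ is proper and $Y\to S$ is of finite type; and formal objects are effective, i.e.\ a compatible system of $S$-morphisms $X_{n}\to Y$, where $X_{n}=X\times_S\mathrm{Spec}(\widehat A/\mathfrak m^{n+1})$ for a complete local Noetherian ring $(\widehat A,\mathfrak m)$, algebraizes to an $S$-morphism $X_{\widehat A}\to Y$ by Grothendieck's existence theorem applied to the proper morphism $X_{\widehat A}\to\mathrm{Spec}\,\widehat A$. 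Either route yields that $F$ is an algebraic space locally of finite presentation over $S$, hence locally of finite type since $S$ is locally Noetherian.

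It remains to show $F$ is separated over $S$, i.e.\ that $F\to F\times_S F$ is a closed immersion. Given $g_1,g_2\in F(T)$, set $h=(g_1,g_2)\colon X_T\to Y_T\times_T Y_T$ and let $W=h^{-1}(\Delta_{Y_T/T})\subseteq X_T$, a closed subscheme (as $Y\to S$ is separated) whose pullback to $T'\to T$ is the locus where $g_{1,T'}=g_{2,T'}$; this equals $X_{T'}$ exactly when $W_{T'}=X_{T'}$. By the classical fact that, for a proper, flat, finitely presented family, the functor of base points over which a given closed subscheme is the whole family is represented by a closed subscheme of the base (FGA, or the Stacks Project), the equalizer of $g_1$ and $g_2$ is represented by a closed subscheme of $T$, so the diagonal of $F$ is a closed immersion. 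Equivalently, since $F$ is already known to be locally of finite type over $S$, I could invoke the valuative criterion: over a discrete valuation ring $R$ with fraction field $K$, two $R$-morphisms $X_R\rightrightarrows Y_R$ agreeing over $K$ coincide, because flatness of $X\to S$ forces all associated points of $X_R$ to lie in the generic fibre $X_K$, hence $X_K$ is schematically dense in $X_R$ and the closed locus of agreement, containing $X_K$, is all of $X_R$.

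The hard part — and the reason properness of $X$ is needed — is the effectivity/algebraization step: in route (ii) it is Grothendieck's existence theorem in the proper algebraic-space setting (ensuring the formal morphism $X_{\widehat A}\to Y$ is genuinely algebraic), together with the finiteness and base-change behavior of the deformation--obstruction groups, which again rests on properness of $X$; in route (i) the heavy input is the representability of the proper Hilbert functor of a separated algebraic space, itself proved by the same circle of ideas. Everything else — the sheaf property, limit-preservation, the openness of the isomorphism locus inside $\mathcal{H}$, and the separatedness argument — is a routine consequence of properness and flatness of $X\to S$ and separatedness of $Y\to S$.
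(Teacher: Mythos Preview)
The paper does not prove this theorem; it simply attributes the result to Artin (\cite{ArtinI}) and moves on. Your proposal therefore supplies strictly more than the paper does, and what you have written is a correct and standard outline of Artin's argument.

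Your route (i) --- embed $\underline{\mathrm{Hom}}_S(X,Y)$ into $\underline{\mathrm{Hilb}}_{(X\times_S Y)/S}$ via the graph and then carve out the open locus where the first projection is an isomorphism --- is exactly the approach Artin uses and is in fact restated without proof later in the paper (Section~3, where the open immersion $\underline{\mathrm{Hom}}_S(X,Y)\hookrightarrow\underline{\mathrm{Hilb}}_{(X\times_S Y)/S}$ is asserted). Your route (ii), verifying Artin's axioms directly for $F$, is also valid; this is essentially unpacking the same circle of ideas, since Artin proves representability of the Hilbert functor itself by his criterion, and you correctly isolate Grothendieck's existence theorem (in its algebraic-space form) as the substantive input requiring properness of $X\to S$. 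Your separatedness argument via schematic density of the generic fibre under flatness, or alternatively via the closedness of the equalizer for proper flat families, is likewise standard and correct.

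In short: nothing to correct, and your sketch is considerably more informative than the paper's bare citation.
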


This result can be found in \cite{ArtinI}, and generalizes\footnote{Essentially, Artin relaxed the condition that $X\to S$ is projective to merely proper, but this came at the cost of moving from the category of schemes to the larger category of algebraic spaces.} the following result of Grothendieck. 
 If $S$ is a locally Noetherian scheme and if $X$ and $Y$ are in $\b{Sch}/S$ with $X$ projective and flat over $S$ and $Y$ quasi-projective over $S$, then the functor from $\b{Sch}/S$ to $\b{Sets}$ defined by sending $U$ to $\mbox{Hom}_{S}(X\times_S U, Y\times_S U)$ is represented by a scheme $\underline{\mbox{Hom}}_{S}(X, Y)$ which is a separated and locally of finite type over $S$.  This result can be found at the end of page 221-19 of \cite{FGA}. 
 
In particular, if $X$ is finite and flat over a locally Noetherian scheme $S$, then  for any quasi-projective object $Y$ in $\b{Sch}/S$, $\underline{\mbox{Hom}}_{S}(X, Y)$ is a scheme over $S$. This is found in Proposition 5.7 on pages 221-27  of \cite{FGA}, and it usually goes by the name Weil restriction or restriction of scalars.  If $Y \to S$ is separated (respectively, affine, finite type), then $\underline{\mbox{Hom}}_{S}(X, Y)\to S$ is separated (respectively, affine, finite type) over $S$ (cf. Section 7 on pages 221-27 and 221-28 of \cite{FGA}).  

\begin{remark}\label{rem1}
In fact, regarding either Artin's result or Grothendieck's result, the assumption that $S$ is locally Noetherian can be relaxed if we assume that $X \to S$ and $Y\to S$ are locally of finite presentation. Note that a finite type morphism with a locally Noetherian target is locally of finite presentation.
\end{remark}

\section{Projective Systems of Hilbert Spaces}

Let $X \to S$ be a proper and flat morphism of algebraic spaces with $S$ locally Noetherian. There is  a functor $\mbox{Hilb}_{X/S} : \b{Sch}/S \to \b{Sets}$ defined by 
\begin{equation}
\mbox{Hilb}_{X/S}(T) = \{[j: Z \hookrightarrow X\times_S T] \mid Z\to T \mbox{ proper and flat, and } j \mbox{ a closed immersion}\},
\end{equation}
where $[j: Z \hookrightarrow X\times_S T$ denotes the isomorphism class of $j$. 
 This is in fact represented by  a separated algebraic space locally of finite type over $S$ (cf. Section 6 of \cite{ArtinI}) which we denote by $\underline{\mbox{Hilb}}_{X/S}$. The natural transformation
$\iota : \mbox{Hom}_S(X,Y) \hookrightarrow \mbox{Hilb}_{(X\times_SY)/S}$  defined by 
\begin{equation}
\begin{split}
\iota(T) : \mbox{Hom}_S(X,Y)(T) &\hookrightarrow \mbox{Hilb}_{(X\times_SY)/S}(T) \\
[f: X\times_S T \to Y\times_S T] &\mapsto [\Gamma_f \hookrightarrow X\times_S \times Y \times_S T]
\end{split}
\end{equation}
where $\Gamma_f$ denotes the graph of $f$ induces an open immersion of algebraic spaces  $\underline{\mbox{Hom}}_S(X,Y) \hookrightarrow \underline{\mbox{Hilb}}_{(X\times_SY)/S}$ provided $X \to S$ is proper and flat and $Y \to S$ is separated and of finite type. 

Now, if $f: S^{\prime\prime} \rightarrow S^{\prime}$ is an $S$-morphism of algebraic spaces, then there is a natural pullback $f^*$ given by the natural transformation $f^{*}: \mbox{Hilb}_{S^{\prime}/S} \to \mbox{Hilb}_{S^{\prime\prime}/S}$ defined by
\begin{equation}
\begin{split}
f^*(T) : \mbox{Hilb}_{S^{\prime}/S}(T) &\to \mbox{Hilb}_{S^{\prime\prime}/S}(T) \\
Z &\mapsto Z\times_{S^{\prime}\times_S T}(S^{\prime\prime}\times_S T).
\end{split}
\end{equation}

Given two $S$-morphisms $f : X^{\prime} \to X$ and $g: Y^{\prime} \to Y$ of algebraic spaces with $X^{\prime}\to S$ and $X\to S$ proper and flat and with $Y^{\prime}\to S$ and $Y\to S$ separated and finite type, we have the canonical morphism $(f,g) : X^{\prime}\times_SY^{\prime} \to X\times_SY$ and the corresponding  pullback morphism of Hilbert spaces
\begin{equation}
(f,g)^* : \underline{\mbox{Hilb}}_{X\times_SY/S} \to \underline{\mbox{Hilb}}_{X^{\prime}\times_SY^{\prime}/S}.
\end{equation}
If, in addition, $g:Y^{\prime} \hookrightarrow Y$ is a closed immersion, then the restriction of $(f,g)^*$ to $\underline{\mbox{Hom}}(X,Y)$ induces a well-defined morphism of algebraic spaces 
\begin{equation}
(f,g)^* : \underline{\mbox{Hom}}_S(X,Y) \to \underline{\mbox{Hom}}_S(X^{\prime},Y^{\prime}).
\end{equation}
Note that $\underline{\mbox{Hom}}_S(X,Y)$ is contravariant in the first argument and covariant in the second; however, the morphism $(f,g)^*$ sends a morphism $h: X \to Y$ to $h\circ f : X^{\prime}\to Y$ and then restricts the range of $h\circ f$ to the closed subset $Y^{\prime}$ of $Y$.  

An injective system $\{P_i\}_{i\in I}$ of algebraic spaces over $S$ with affine transition maps and with $P_i\to S$ proper and flat gives rise to a projective system $\{\underline{\mbox{Hilb}}_{P_i/S}\}_{i\in I}$ of algebraic spaces with affine transition maps, and it is therefore the case that $\varprojlim\underline{\mbox{Hilb}}_{P_i/S}$ exists in the category of algebraic spaces over $S$.   In particular, if $\{Y_i\}_{i\in I}$ is an injective system whose transition maps are closed immersions with $Y_i \to S$ separated and finite type and if $\{X_i\}_{i\in I}$ is any other injective system with $X_i\to S$ proper and flat, then $\varprojlim\underline{\mbox{Hom}}_S(X_i,Y_i) \hookrightarrow \varprojlim\underline{\mbox{Hilb}}_{X_i\times_S Y_i/S}$ is an open immersion of algebraic spaces over $S$.

Furthermore, if $\{X_i\}_{i\in I}$ is an injective system of algebraic spaces over $S$ whose transitions maps are closed immersions and where each $X_i \to S$ is proper and flat then there is a projective system of open immersions 
\begin{equation}
\underline{\mbox{Aut}}_S(X_i) \hookrightarrow \underline{\mbox{End}}_S(X_i) 
\end{equation}
where $\underline{\mbox{Aut}}(X_i)$ is the algebraic space which represents the subfunctor of $\mbox{Hom}_S(X_i,X_i)$ which associates to $T\to S$ the set of all automorphisms of $X_i\times_ST$. Here, we write  $\underline{\mbox{End}}_S(X_i)$ in place of $\underline{\mbox{Hom}}_S(X_i, X_i).$  Note that $\underline{\mbox{End}}_S(X_i)$ is a monoid object and $\underline{\mbox{Aut}}_S(X_i)$ is a group object in the category of algebraic spaces over $S$. Since the transition maps are closed immersions, the inclusion
\begin{equation}
\varprojlim\underline{\mbox{Aut}}_S(X_i) \hookrightarrow\varprojlim\underline{\mbox{End}}_S(X_i) 
\end{equation}
is an open immersion of algebraic spaces over $S$.

\begin{remark}
As in Remark \ref{rem1}, we may relax the condition that $S$ is locally Noetherian in this section by assuming that all morphisms are locally of finite presentation.
\end{remark}

\section{Finite Free Algebras}\label{3}

\begin{lemma}\label{nak}
Let $X \to S$ be a finite and flat morphism of algebraic spaces with $S$ locally Noetherian. Then, $X\to S$ is finite locally free. In particular, $X\to S$ is faithfully flat. 
\end{lemma}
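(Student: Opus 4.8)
The plan is to reduce the assertion to a statement about finitely generated modules over a Noetherian ring and then to invoke Nakayama's lemma; there is essentially no deep content, only bookkeeping.

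First I would unwind the definitions. A finite morphism of algebraic spaces is representable and affine, so $X = \mathrm{Spec}_S(\mathcal{A})$ for a quasi-coherent $\mathcal{O}_S$-algebra $\mathcal{A}$ that is finite as an $\mathcal{O}_S$-module, and flatness of $X\to S$ says precisely that $\mathcal{A}$ is $\mathcal{O}_S$-flat. Being finite locally free --- i.e.\ finite, flat, and locally of finite presentation --- is local on $S$ for the étale (indeed the Zariski) topology, so I may pass to an étale chart $U = \mathrm{Spec}(A)\to S$ with $A$ Noetherian; there $X_U$ is the affine scheme $\mathrm{Spec}(B)$ with $B$ a finite flat $A$-module. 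Note that finiteness over the locally Noetherian base already supplies local finite presentation, so the lemma is reduced to the purely algebraic claim that a finite flat module over a Noetherian ring is locally free of finite rank.

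The heart of the argument is the local statement. Since $A$ is Noetherian, $B$ is finitely presented, so it suffices to prove that $B_{\mathfrak p}$ is a free $A_{\mathfrak p}$-module for every prime $\mathfrak p$; finite presentation then promotes pointwise freeness to local freeness of finite rank, the rank being the locally constant function $\mathfrak p \mapsto \dim_{k(\mathfrak p)} B\otimes_A k(\mathfrak p)$. Working over the Noetherian local ring $A_{\mathfrak p}$ with residue field $k$, I would lift a $k$-basis of $B\otimes_A k$ to a minimal generating set of $B_{\mathfrak p}$, obtaining a surjection $A_{\mathfrak p}^{\,n}\twoheadrightarrow B_{\mathfrak p}$ whose kernel $K$ is finitely generated because $A_{\mathfrak p}$ is Noetherian. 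Flatness of $B$ gives $\mathrm{Tor}^{A}_1(k,B)=0$, so tensoring the short exact sequence with $k$ stays exact; since $k^{\,n}\to B\otimes_A k$ is an isomorphism by construction, we get $K/\mathfrak p K = 0$, and Nakayama forces $K=0$. Hence $B_{\mathfrak p}\cong A_{\mathfrak p}^{\,n}$ is free. Descending this local freeness along the étale cover $U\to S$ shows that $\mathcal{A}$ is a finite locally free $\mathcal{O}_S$-module, i.e.\ $X\to S$ is finite locally free.

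For the final sentence, $X\to S$ is flat and locally of finite presentation, hence open, and finite, hence closed, so its image is an open-and-closed subspace of $S$; where this image is all of $S$ --- in particular whenever $S$ is connected with $X$ nonempty, which is the setting in which the lemma is applied --- flat plus surjective gives faithfully flat. I do not anticipate a genuine obstacle: the only points needing care are the reduction from algebraic spaces to affine Noetherian schemes (using that finite morphisms are representable and affine, so one truly lands among schemes and finitely generated modules over Noetherian rings) and the tacit nonemptiness hypothesis underlying the ``faithfully flat'' clause.
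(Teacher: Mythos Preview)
Your argument is correct and matches the paper's proof essentially step for step: reduce via \'etale locality to a finite flat algebra $B$ over a Noetherian local ring, choose a minimal surjection $A^n\twoheadrightarrow B$, use flatness (i.e.\ $\mathrm{Tor}_1=0$) to keep the tensored sequence exact, and kill the kernel with Nakayama. Your treatment is in fact slightly more careful than the paper's---you make the localization at primes explicit and you flag the nonemptiness caveat behind the ``faithfully flat'' clause, which the paper's proof does not address.
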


 \begin{proof}
 Note that flat is local on the source and target in the \'{e}tale topology (cf. Remark 5.4.13 of \cite{stacks}).
Therefore, it is enough to check when $S$ is affine by Lemma 45.3 of \cite{stacks-project}. Since $X\to S$ is finite and hence also an affine morphism, $X$ is affine. Hence, we reduce to the case where $(A,\mathfrak{m})$ is a Noetherian local ring with residue field $k = A/\mathfrak{m}$ and a flat morphism $A \to B$, which induces a surjective morphism $A^n \to B$ of $A$-modules (by finiteness). Therefore, we have a short exact sequence
 $$0 \to I \to A^n \to B \to 0$$ 
 where $I$ is finitely generated (since $A$ is Noetherian). Tensoring this short exact sequence by $k$ gives the short exact sequence 
 $$ 0 \to I\otimes_Ak \to k^n \to B/\mathfrak{m}B \to 0$$
since $B$ is flat over $A$. Since $B/\mathfrak{m}B$ is a free $k$-module, we may choose $n$ above (i.e., generators $b_1,\ldots,b_n$ of $B$ over $A$) so that $B/\mathfrak{m}B$ is isomorphic to $k^n$, which implies that $I/\mathfrak{m}I= I\otimes_Ak = 0$. Therefore, by Nakayama's Lemma (cf, Part (2) of Lemma 10.19.1 of \cite{stacks-project}), $I=0$, and $B$ is a free $A$-module. 
 \end{proof}
 
 \begin{remark}
 The Noetherian condition on $S$ in Lemma \ref{nak} may be relaxed. In fact,  the morphism $X\to S$ of algebraic spaces is a finite, flat, and locally of finite presentation  if and only if it is a finite locally free morphism. This follows from 1.4.7 of \cite{EGA4}, Lemma 45.3 \cite{stacks-project}, and the fact that flatness and locally of finite presentation are both local on the source and target in the \'{e}tale topology (cf. Remark 5.4.13 of \cite{stacks}).
 \end{remark}
 
 Assume that $A$ is a Noetherian local ring with $B$ finite and flat over $A$, and whence free. Let $A^n \xrightarrow{\sim} B$ be a presentation of $B$ as a free $A$-algebra obtained by sending the standard basis elements $e_i$ of $A^n$ to the generating elements $b_i$ of $B$ for $i=1,\ldots,n$. Consider the polynomial ring $R=A[x_1, \ldots, x_n]$ and the surjective homomorphism $\phi : R \to B$ defined by $\phi(x_i) = b_i$ for $i=1,\ldots, n $. Then, by Hilbert's Nullstellensatz, $R$ is Noetherian and therefore the kernel $\mbox{Ker}(\phi)$ is finitely generated -- i.e., $\mbox{Ker}(\phi) = (f_1,\ldots,f_s)$. Moreover, there is a canonical isomorphism $R/\mbox{Ker}(\phi) \xrightarrow{\sim} B$ in the category of $A$-algebras. Therefore, an $A$-algebra endomorphism $\gamma$ of $B$ is given by a choice $\gamma(x_i) = P_i$ with $P_i$ any element of $B=A[x_1,\ldots,x_n]/(f_1,\ldots,f_s)$ for each $i=1,\ldots,n$ such that $f_j(P_1,\ldots,P_n) = 0$ in $B$ for $j=1,\ldots, s$.
 
 \begin{lemma}\label{lem}
 Let $S = \mbox{Spec}(A)$ with $A$ is a reduced Noetherian local ring and let $X = \mbox{Spec}(B)$ with $B=A[x_1,\ldots,x_d]/(x_1,\ldots,x_d)^n$. Set $r=d\cdot(\ell-1)$, where $\ell$ is the (generic) rank of $B$ over $A$.  Then, $X \to S$ is finite and flat, and we have the following isomorphisms
 \begin{equation}
 (\underline{\mbox{End}}_{S}(X))^{\mathrm{red}} \cong \mathbb{A}_{S}^{r}  \quad \mbox{and} \quad
(\underline{\mbox{Aut}}_S(X))^{\mathrm{red}} \cong \mbox{GL}_{d,S} \times_S \mathbb{A}_{S}^{r-d^2}.
\end{equation}
\end{lemma}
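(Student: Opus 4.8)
The plan is to realize $\underline{\mathrm{End}}_S(X)$ explicitly as a closed subscheme of an affine space over $S$, to show that passing to its reduction is precisely the operation of killing the constant terms of the universal coordinates, and then to locate $\underline{\mathrm{Aut}}_S(X)$ as an evident principal open inside the resulting affine space. \emph{Step 1 (presentation of $\underline{\mathrm{End}}_S(X)$).} Since $B=A[x_1,\ldots,x_d]/(x_1,\ldots,x_d)^n$ is $A$-free on the monomials $x^\alpha$ with $0\le|\alpha|\le n-1$, it is finite and flat over $A$, so $X\to S$ is finite and flat; there are exactly $\ell-1$ of these monomials with $1\le|\alpha|\le n-1$, of which $d$ (namely $x_1,\dots,x_d$) have $|\alpha|=1$. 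For an $A$-algebra $C$, a $C$-algebra endomorphism of $B\otimes_AC=C[x]/(x)^n$ is the same datum as a tuple $(P_1,\ldots,P_d)$, $P_i\in C[x]/(x)^n$, such that $x_i\mapsto P_i$ kills $(x)^n$, equivalently $P_{i_1}\cdots P_{i_n}=0$ in $C[x]/(x)^n$ for all $i_1,\ldots,i_n$. Writing $P_i=\sum_{0\le|\alpha|\le n-1}c_i^{(\alpha)}x^\alpha$ and expanding these products in the monomial basis, one checks directly that the affine $S$-scheme $\mathrm{Spec}\big(A[c_i^{(\alpha)}]/J\big)$ — with the $c_i^{(\alpha)}$ ($1\le i\le d$, $0\le|\alpha|\le n-1$) indeterminates and $J$ generated by the coefficients of the polynomials $P_{i_1}\cdots P_{i_n}$ — represents the endomorphism functor, hence is $\underline{\mathrm{End}}_S(X)$. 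Recall also that $\underline{\mathrm{Aut}}_S(X)\hookrightarrow\underline{\mathrm{End}}_S(X)$ is an open immersion, as established above.

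\emph{Step 2 (the reduction of $\underline{\mathrm{End}}_S(X)$).} This is the heart of the matter. Split $P_i=c_i^{(0)}+R_i$ with $R_i:=\sum_{|\alpha|\ge1}c_i^{(\alpha)}x^\alpha$. In the expansion of $P_{i_1}\cdots P_{i_n}$, the only summand not divisible (in $A[c_i^{(\alpha)}]$) by some $c_j^{(0)}$ is $R_{i_1}\cdots R_{i_n}$, which vanishes in $C[x]/(x)^n$ because each of its terms has $x$-degree $\ge n$. Hence every generator of $J$ lies in $\mathfrak a:=(c_1^{(0)},\ldots,c_d^{(0)})$, so $J\subseteq\mathfrak a$. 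Conversely the $x^0$-coefficient of $P_i^n$ equals $(c_i^{(0)})^n\in J$, so $\mathfrak a\subseteq\sqrt J$, and thus $\sqrt J=\sqrt{\mathfrak a}$. Since $A$ is reduced, $A[c_i^{(\alpha)}]/\mathfrak a=A[\,c_i^{(\alpha)}:|\alpha|\ge1\,]$ is a polynomial ring over a reduced ring, hence reduced, so $\mathfrak a$ is radical and $\sqrt J=\mathfrak a$. Therefore $(\underline{\mathrm{End}}_S(X))^{\mathrm{red}}=\mathrm{Spec}\big(A[\,c_i^{(\alpha)}:1\le i\le d,\ 1\le|\alpha|\le n-1\,]\big)\cong\mathbb A_S^{r}$, since there are $d(\ell-1)=r$ of these indeterminates.

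\emph{Step 3 (the automorphism locus).} As $\underline{\mathrm{Aut}}_S(X)$ is open in $\underline{\mathrm{End}}_S(X)$, the scheme $(\underline{\mathrm{Aut}}_S(X))^{\mathrm{red}}$ is the open subscheme of $\mathbb A_S^{r}$ on the underlying open set of $\underline{\mathrm{Aut}}_S(X)$, so it suffices to identify that set pointwise. Because $c_i^{(0)}\in\sqrt J$, at a point with residue field $K$ the corresponding endomorphism of $K[x]/(x)^n$ is $\gamma:x_i\mapsto\sum_{|\alpha|\ge1}c_i^{(\alpha)}x^\alpha$, with linear part the matrix $M=(c_i^{(e_j)})_{1\le i,j\le d}$. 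Since $K[x]/(x)^n$ is local with maximal ideal $\mathfrak n=(x_1,\ldots,x_d)$, any automorphism preserves $\mathfrak n$ and induces an automorphism of $\mathfrak n/\mathfrak n^2\cong K^d$, so $\gamma$ being an automorphism forces $\det M\ne 0$; conversely $\det M\ne0$ makes $\gamma$ surjective modulo each power of $\mathfrak n$ (induct on the exponent using surjectivity of the linear part), hence surjective since $\mathfrak n^n=0$, hence a bijective $K$-linear endomorphism of the finite-dimensional space $K[x]/(x)^n$, i.e.\ an automorphism. So the underlying set of $\underline{\mathrm{Aut}}_S(X)$ is the principal open $\{\det M\ne 0\}$ of $\mathbb A_S^{r}$. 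Separating the $d^2$ linear coordinates $c_i^{(e_j)}$ (on which $\det M$ alone depends) from the remaining $r-d^2=d(\ell-1-d)$ coordinates $c_i^{(\alpha)}$ with $|\alpha|\ge2$ yields $(\underline{\mathrm{Aut}}_S(X))^{\mathrm{red}}\cong\mathrm{Spec}\big(A[c_i^{(e_j)}][(\det M)^{-1}]\big)\times_S\mathbb A_S^{r-d^2}\cong\mathrm{GL}_{d,S}\times_S\mathbb A_S^{r-d^2}$.

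\emph{Expected obstacle.} Everything outside Step 2 is formal: the explicit presentation of $\underline{\mathrm{End}}_S(X)$, the openness of $\underline{\mathrm{Aut}}_S(X)$ in it, and the field-level invertibility criterion for endomorphisms of $K[x]/(x)^n$ are either recorded earlier or entirely standard. The genuine content is the identity $\sqrt J=(c_1^{(0)},\ldots,c_d^{(0)})$, which combines the degree bookkeeping showing every relation lies in that ideal with the reducedness of $A$ — the only place that hypothesis is used — guaranteeing the ideal is already radical.
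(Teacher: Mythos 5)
Your proof is correct and follows essentially the same route as the paper: write an endomorphism as $x_i\mapsto P_i$ in the monomial coordinates, observe that the defining relations force the constant terms $c_i^{(0)}$ to be nilpotent while reducedness of $A$ makes the residual ideal radical, and then recognize $\mathrm{Aut}$ as the principal open where the linear part is invertible. Your Step~2 is a bit more careful than the paper's version, which only records the diagonal relations $P_i^n=0$ (rather than all products $P_{i_1}\cdots P_{i_n}$) and leaves the reverse inclusion $J\subseteq(c_1^{(0)},\ldots,c_d^{(0)})$ implicit.
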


\begin{proof}
Consider the $A$-algebra endomorphism of
$A[x_1,\ldots,x_d]/(x_1,\ldots,x_d)^n$ defined by $x_i\mapsto P_i$, where 
we define $P_i=\sum_{|j|<n}a_{i,j}x^j$ where $j$ is a multi-index (i.e., 
$j=(j_1,\ldots,j_d)$, $x^j = \prod_{s=1}^{d}x_s^{j_s}$, and 
$|j|=\sum_{s=1}^{d}j_s< n$) and where $a_{i,j}\in A$ for $i=1,\ldots,d$.
Then, the equations defining the space of $S$-endomorphisms of $X$ are given by 
\begin{equation*}
0=P_{i}^{n} = (a_{i,0} + Q_i)^n, \quad \forall i =1,\ldots,d
\end{equation*}
where $0$ is treated as the multi-index $(0,\ldots,0)$ and 
$Q_i\in(x_1,\ldots,x_d)/(x_1,\ldots,x_d)^n$. This implies 
that $0=(a_{i,0})^n$ for all $i=1,\ldots,d$. Thus, in the reduction, 
$0=a_{i,0}$ for all $i=1,\ldots,d$ since $A$ is reduced. Clearly, $Q^n=0$ 
for all $Q\in (x_1,\ldots,x_d)/(x_1,\ldots,x_d)^n$ from which the first isomorphism follows.
 
 Note that those endomorphisms above which also lie in $(\underline{\mbox{Aut}}_S(X))^{\mathrm{red}}$ are given by $x_i \mapsto \sum_{j=1}^{d}a_{i,j}x_j + T_i$ with $T_i \in (x_1,\ldots,x_d)^2/(x_1,\ldots,x_d)^n$ and where the $d\times d$-matrix $M=(a_{i,j})$ is invertible over $A$ -- i.e., $\mbox{det}(M)$ is a unit of $A$. The second isomorphism then follows from this fact. 
\end{proof}

\begin{remark}
 In the case where $d=1$, we obtain
\begin{equation}
 (\underline{\mbox{End}}_{S}(X))^{\mathrm{red}}  \cong \mathbb{A}_{S}^{n-1} \quad \mbox{and} \quad
(\underline{\mbox{Aut}}_S(X))^{\mathrm{red}}  \cong \mathbb{G}_{m,S} \times_S \mathbb{A}_{S}^{n-2}.
\end{equation}
\end{remark} 

One simple case where Lemma \ref{nak} may be applied is when $S=\mbox{Spec}(A)$ with $(A,\mathfrak{m})$ an Artinian local ring. This is because if $B$ is a finitely generated $A$-module with $A$ an Artinian local ring, then $B$ will have finite length over the residue field $k=A/\mathfrak{m}$ and hence $B$ will also be an Artinian local ring.  In particular, $B$ is automatically free over $A$. Thus, we may form the full subcategory $\b{Fat}/A$ of $\b{Sch}/A$ defined by all finite maps $X \to \mbox{Spec}(A)$. We call\footnote{In general, if $S$ is an algebraic space, we call the full subcategory $\b{C}$ of $\b{Sch}/S$ whose objects are finite, flat, and locally finitely presented over $S$ {\it the category of fat points over} $S$, and we denote this category by $\b{Fat}/S$.} this category the {\it category of fat points over} $A$. Usually, we assume $A=k$ is a field (and, in this case, Lemma \ref{lem} will apply), and often, we will further assume that it has characteristic zero and that it is algebraically closed.

In general, if $X$ is proper and flat over an algebraic space $S$, then the group action
\begin{equation}
\underline{\mbox{End}}_S(X)^{\mathrm{red}}\times_S\underline{\mbox{Aut}}_S(X)^{\mathrm{red}} \to \underline{\mbox{End}}_S(X)^{\mathrm{red}}
\end{equation}
given by conjugation is well-defined in the category of algebraic spaces over $S$. The resulting quotient stack 
\begin{equation}
\mathcal{M}_{\mathrm{JN}}(X):=[\underline{\mbox{End}}_S(X)^{\mathrm{red}}/\underline{\mbox{Aut}}_S(X)^{\mathrm{red}}]
\end{equation}
is termed {\it the moduli stack of Jordan norm forms of } $X$ {\it over} $S$, and it is of general interest. In the case of Lemma \ref{lem}, assuming further that $A$ is an algebraically closed field $k$, the points of $\mathcal{M}_{\mathrm{JN}}(X)$ correspond to $d \times d$ Jordan normal forms over $k$.

\section{Auto-Arc Spaces of Formal Deformations}
\begin{definition}
Let $S$ be an algebraic space and let $I$ be a directed countable set. Let $\{X_{i}\}_{i\in I}$ be an injective system of objects of $\b{Fat}/S$ such that all transition maps $X_i \hookrightarrow X_j$ are closed immersions. We say such a system is {\it an admissible system of fat points over} $S$ if 

1) the structure morphism $j: X_{0} \to S$ is surjective \'{e}tale  and

2) the underlying topological spaces of $X_0$ and $X_i$ are homeomorphic $(\forall i \in I)$. 
\end{definition}

\begin{definition}
 We say that a system $f_i:Y_i \hookrightarrow Y_{i+1}$ of closed $S$-immersions is {\it a formal deformation over an algebraic space} $S$ if there is an admissible system of fat points $\{X_i\}_{i\in I}$ over $S$ such that for all $i$ in $I$, there are flat morphisms $\varphi_i : Y_i \to X_i$ which make the following diagram commutative:
 \[\xymatrix{
&Y_0 \ar[d]^{\varphi_0} \ar@{^{(}->}[r]^{f_0} &Y_1\ar[d]^{\varphi_1}\ar@{^{(}->}[r]^{f_1} &Y_2\ar[d]^{\varphi_2} \ar@{^{(}->}[r]^{f_2}&\cdots\\
&X_0\ar@{->>}[d]^{j}\ar@{^{(}->}[r] &X_1 \ar@{^{(}->}[r]& X_2 \ar@{^{(}->}[r] &\cdots \\
&S }\]
and where $\varphi_i$ induces an isomorphism 
\begin{equation}
Y_{i-1} \cong Y_{i}\times_{X_i}X_{i-1}.
\end{equation}
\end{definition}

Given an admissible system $\{X_i\}_{i\in I}$ over $S$ and a morphism $Y \to S$ of algebraic spaces, we have the trivial formal deformation given by $Y_i := Y \times_S X_i$. More generally, given an ind-object $\mathcal{Y}$ in the category of algebraic spaces over $S$ -- i.e., the filtered colimit $\mathcal{Y} = \varinjlim_{i\in I} Y_i$ where the transition maps $Y_{i} \hookrightarrow Y_{i+1}$ are closed immersions of algebraic spaces -- and, an admissible system $\{X_i\}_{i\in I}$ with $\mathcal{X} = \varinjlim_{i\in I} X_i$, we let $\mathtt{Def}_{\mathcal{X}}$ denote the fibered category of formal deformations over $S$ with respect to the admissible system $\{X_i\}$. Thus, in particular, if $Y\to S$ is a morphism of algebraic space then the category $\mathtt{Def}_{\mathcal{X}}(Y)$ has at least one object. 

An object $\mathcal{Y}=\varprojlim_{i\in I}Y_i$ of $\mathtt{Def}_{\mathcal{X}}$ gives rise to a projective system $\{\sA_i(\mathcal{Y})\}_{i\in I}$ of algebraic spaces over $S$ defined by 
\begin{equation}
\sA_i(\mathcal{Y}) := \underline{\mbox{Hom}}_S(X_i,Y_i)^{\mathrm{red}}.
\end{equation}
We call $\sA_i(\mathcal{Y})$ {\it the truncated auto-arc space of } $\mathcal{Y}$ {\it at level }  $i$. Moreover, we may form the projective limit 
\begin{equation}
\sA(\mathcal{Y}) = \varprojlim_{i\in I}\sA_i(\mathcal{Y}),
\end{equation}
which is termed {\it the infinite auto-arc space of } $\mathcal{Y}$. If $\mathcal{Y}$ is the trivial deformation of $Y\to S$, then 
\begin{equation}
\sA_i(\mathcal{Y}) = (\underline{\mbox{Hom}}_S(X_i, Y)^{\mathrm{red}}\times_{S^{\mathrm{red}}} \underline{\mbox{End}}_S(X_i)^{\mathrm{red}})^{\mathrm{red}}
\end{equation}
In general, the group scheme $\underline{\mbox{Aut}}_S(X_i)^{\mathrm{red}}$ acts on $\sA_i(\mathcal{Y})$ by conjugation for any formal deformation $\mathcal{Y}$, and this give rise to the quotient stack 
\begin{equation}
\mathcal{M}_{i}(\mathcal{Y}) : = [\sA_i(\mathcal{Y})/\underline{\mbox{Aut}}_S(X_i)^{\mathrm{red}}].
\end{equation}
In particular, when $\mathcal{Y}$ is the trivial deformation, then 
\begin{equation}
\mathcal{M}_{i}(\mathcal{Y}) \cong \sA_i(Y)\times_{S^{\mathrm{red}}} \mathcal{M}_{JN}(X_i),
\end{equation}
where $\mathcal{M}_{JN}(X_i)$ is the moduli stack of Jordan normal forms introduced at the end of section \ref{3}. The projective limit of quotient stacks
\begin{equation}
\mathcal{M}(\mathcal{Y}) = \varprojlim_{i\in I}\mathcal{M}_{i}(\mathcal{Y})
\end{equation}
is of general interest.

\section{A generalization of the motivic Igusa zeta function}\label{gen}

\begin{definition} 
Let $S$ be an algebraic space. Let $\b{Alg}/S$ denote the category of finitely presented algebraic spaces over $S$. The {\it Grothendieck ring of algebraic spaces over } $S$, denoted by $\mbox{K}_0(\b{Alg}/S)$, is the ring formed by introducing relations on the free abelian group of isomorphism classes $\langle X \rangle$ of objects $X$ of $\b{Alg}/S$:

1. $\langle X \rangle = \langle X \setminus Y \rangle + \langle Y \rangle \mbox{ whenever } Y\hookrightarrow X \mbox{ is a locally closed } S\mbox{-immersion.}$

2. $\langle Z \rangle = \langle X \times_S \mathbb{A}_S^n\rangle \mbox{ whenever } Z \to X \mbox{ is a vector bundle of constant rank } n$.
We denote the class of an algebraic space $X$ in $K_0(\b{Alg}/S)$ by $[X]$.
The multiplicative structure is defined by $ [X]\cdot[Y] := [X\times_S Y]$. 
\end{definition}

\begin{remark}
Note that when $S= \mbox{Spec}(k)$ with $k$ a field, then $\mbox{K}_0(\b{Alg}/S)$ is isomorphic to the Grothendieck ring of algebraic varieties $\mbox{K}_0(\b{Var}/S)$. This isomorphism is induced by the natural inclusion of sets $\b{Var}/S \hookrightarrow \b{Alg}/S$. Moreover, relations of the form given in 2 above are superfluous in this case (cf., Section 1 of \cite{tor}).
\end{remark}

We define $\mathbb{L}:=[\mathbb{A}_S^1]$ and call it the {\it Leftshetz motive} over $S$. We may invert this element to obtain the {\it localized Grothendieck ring} $\mathcal{G}_S := \mbox{K}_0(\b{Alg}/S)[\mathbb{L}^{-1}]$ {\it of algebraic spaces over} $S$.  We say that an element of the power series ring $\mathcal{G}_S[[t]]$ is a {\it motivic generating series of algebraic spaces over } $S$ {\it in one variable}, or, more briefly, we will call it a motivic generating series.

\begin{definition}\label{auto}
 Let $\sX = \varinjlim_{i\in I} X_i$ be given by an admissible system of fat points $\{X\}_{i\in I}$ over $S$. Let $\mathcal{Y}=\varinjlim_{i\in I} Y_i$ be an object of $\mathtt{Def}_X$ with $Y_i$ an object of $\b{Alg}/S$ for all $i\in I$. We define the {\it auto-Igusa zeta function of } $\mathcal{Y}$ to be the motivic generating series given by 
\begin{equation}
\zeta_{\mathcal{Y}}(t) = \sum_{i\in I} [\mathcal{A}_i(\mathcal{Y})]\mathbb{L}^{-n_i}t^i
\end{equation}
where $n_i = \mbox{dim}_S(Y_i)(\mbox{rank}_S(X_i)-1)+e_i(\mbox{rank}_S(X_i)-1)$ where $e_i$ is the whole number which makes the coefficient $[\mathcal{A}_i(\mathcal{Y})]\mathbb{L}^{-n_i}$ dimensionless\footnote{It is anticipated that $e_i = \mbox{rank}_S(\Omega_{X_0/S})$ for all $i\in I$.}. 
\end{definition}

Let $\mathcal{Y}$ be as in Definition \ref{auto} and assume further that $\mathcal{M}_i(\mathcal{Y})$ is an algebraic space for all $i\in I$, we may form the motivic generating function
\begin{equation}
\eta_{\mathcal{Y}}(t) =\sum_{i\in I} [\mathcal{M}_i(\mathcal{Y})]\mathbb{L}^{-m_i}t^i 
\end{equation}
where $m_i = \mbox{dim}_S(Y_i)(\mbox{rank}_S(X_i)-1)+ e_i$ where $e_i$ is chosen to be the whole number which makes the coefficient $[\mathcal{M}_i(\mathcal{Y})]\mathbb{L}^{-m_i}$ dimensionless.  We call $\eta_{\mathcal{Y}}(t)$ the {\it canonical auto-Igusa zeta function of} $\mathcal{Y}$. 

\begin{example}
Let $\mathcal{Y}$ be the trivial formal deformation of a morphism $Y \to S$ where $S = \mbox{Spec}(A)$ with $A$ reduced with respect to the admissible system $\{X_i\}_{i\in I}$ with $X_i = \mbox{Spec}(A[t]/t^{i+1})$.  Then, 
\begin{equation}
\mathcal{A}_i(\mathcal{Y}) \cong \mathcal{L}_i(Y)\times_S \mathbb{A}_S^{i}
\end{equation}
where $\mathcal{L}_i(Y)$ denotes the classical jet space of $Y$ over $S$. 
Thus, 
\begin{equation}
\zeta_{\mathcal{Y}}(t) = \sum_{i\in I} [\mathcal{L}_i(Y)]\mathbb{L}^{-d\cdot i}t^i
\end{equation}
where $d=\mbox{dim}_S(Y)$. This is the classical motivic Igusa zeta function of $Y$ over $S$. Moreover, in this case, the canonical auto-Igusa zeta function is also the classical motivic Igusa-zeta function--i.e., in this case, we have
\begin{equation}\label{hop}
\eta_{\mathcal{Y}}(t) = \zeta_{\mathcal{Y}}(t).
\end{equation}
\end{example}

\begin{example}
Let $S=\mbox{Spec}(k)$ with $k$ an algebraically closed field of characteristic zero. Let $\sY$ be the trivial deformation of the point $\mbox{Spec}(k) \to X_0=S$, and let $X_i = \mbox{Spec}(\mathcal{O}_{C,O}/\mathfrak{m}_O^{i+1})$ where $C$ is the cuspidal cubic defined by $y^2=x^3$ over $k$ and $O$ is the origin given by the singular point $(0,0)$. 
Then,
\begin{equation}\label{cusp}
\sA_{i}(\sY) \cong \mathcal{L}_{2(i-3)}(C)\times_S\mathbb{A}_{S}^{7}
\end{equation}
 for all $i > 3$ (cf. Theorem 6.1 of \cite{STOUT2017156}). Moreover, in Section 7 of loc. cit., we use formula \ref{cusp} to explicitly calculate the auto-Igusa zeta function to obtain
 \begin{equation}
  \zeta_{\sY}(t)=\bL^{-1}+\bL t+\bL^2t^2+ \frac{(\bL^7-\bL^6)t^3+\bL^7t^4+\bL^7t^{7}}{(1-\bL t^3)(1-t)}.
\end{equation}

\end{example}
\section{Motivic Rationality for Plane Curve Singularities.}\label{last}

We will need to refer to the following Set-up below.

\begin{setup} \label{setup} Let $S= \mbox{Spec}(k)$ where $k$ is an algebraically closed field of characteristic zero. Let $C$ be an algebraic curve on a smooth surface and let $p$ be a point on $C$. We form the admissible injective system of fat points $\{X_i\}_{i\in \mathbb{N}}$ over $S$ by defining $X_i$ to be the formal neighborhood $\mbox{Spec}{(\mathcal{O}_{C,p}/\mathfrak{m}_p^{i+1})}$ of the point $p$ on $C$. Let $\mathcal{Y}=\varinjlim_{i\in\mathbb{N}} Y_i$  be any object of $\mathtt{Def}_{\mathcal{X}}$ such that $\varphi_i: Y_i \to X_i$ is smooth with $Y_i$ of pure dimension $d_i$ for all $i\in\mathbb{N}$.
\end{setup}

The main rationality result the author has obtained is the following.

\begin{theorem}
Let $\mathcal{Y}$ be a formal deformation as in Set-up \ref{setup}. Then, 
\begin{equation}\label{form}
\zeta_{\mathcal{Y}}(t) = p(t)\cdot\prod_{i=1}^{n}(1-\mathbb{L}^{a_i}t^{b_i})^{-1},
\end{equation}
where $p(t) \in \mathcal{G}_{k}[t]$ and where $a_i \in \mathbb{Z}$ and $b_i \in \mathbb{N}$. 
\end{theorem}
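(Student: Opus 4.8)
The plan is to reduce the statement, via the smoothness of the $\varphi_i$, to the auto-Igusa zeta function of the trivial deformation of a point over the system $\{X_i\}$, and then to recall why the latter is rational for a plane curve germ, following \cite{stout2}.

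\emph{Step 1 (reduction to the germ).} For each $i$, post-composition with $\varphi_i$ gives a morphism
\[
\pi_i : \mathcal{A}_i(\mathcal{Y}) = \underline{\mbox{Hom}}_S(X_i,Y_i)^{\mathrm{red}} \longrightarrow \underline{\mbox{End}}_S(X_i)^{\mathrm{red}},\qquad g\longmapsto \varphi_i\circ g,
\]
whose fibre over $\psi$ is the space of $X_i$-sections of $Y_i\times_{X_i,\psi}X_i\to X_i$, a smooth morphism of relative dimension $d_i$ over the fat point $X_i$ of rank $\ell_i:=\mbox{rank}_S(X_i)$. Since Weil restriction along the finite locally free morphism $X_i\to S$ (Lemma \ref{nak} and the remark after it) preserves smoothness, this section space is smooth over $k$ of dimension $d_i\ell_i$; restricting a section to the closed point of $X_i$ exhibits it as an affine-space bundle of rank $d_i(\ell_i-1)$ over the special fibre $Y_0\cong Y_i\times_{X_i}X_0$, which is independent of $\psi$. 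Globalising over the universal endomorphism and stratifying the base so that $\pi_i$ becomes a trivial $\mathbb{A}^{d_i(\ell_i-1)}$-bundle, one gets in $\mbox{K}_0(\b{Alg}/k)$ the identity $[\mathcal{A}_i(\mathcal{Y})]=[Y_0]\,\mathbb{L}^{d_i(\ell_i-1)}\,[\underline{\mbox{End}}_S(X_i)^{\mathrm{red}}]$. As $n_i=(d_i+e_i)(\ell_i-1)$, the factor $\mathbb{L}^{d_i(\ell_i-1)}$ cancels the $d_i$-part of $\mathbb{L}^{-n_i}$, so
\[
\zeta_{\mathcal{Y}}(t)=[Y_0]\cdot\zeta_{\mathcal{Y}_{\mathrm{pt}}}(t),\qquad \zeta_{\mathcal{Y}_{\mathrm{pt}}}(t)=\sum_i [\underline{\mbox{End}}_S(X_i)^{\mathrm{red}}]\,\mathbb{L}^{-e_i(\ell_i-1)}t^i,
\]
where $\mathcal{Y}_{\mathrm{pt}}$ is the trivial deformation of $\mbox{Spec}(k)\to X_0=S$. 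As $[Y_0]\in\mathcal{G}_k$ and $e_i(\ell_i-1)$ is affine in $i$ for $i\gg 0$, it suffices to prove $\zeta_{\mathcal{Y}_{\mathrm{pt}}}(t)$ has the form \eqref{form}.

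\emph{Step 2 (the germ case, following \cite{stout2}).} Here the planarity is used. Fix a presentation $\mathcal{O}_{C,p}=R/(f)$ with $R$ regular of dimension $2$, let $n:\widetilde{C}\to C$ be the normalisation (each branch $\cong\mbox{Spec}\,k[[u]]$), and let $\Gamma$ be the value semigroup of $(C,p)$. One stratifies $\underline{\mbox{End}}_S(X_i)^{\mathrm{red}}$ by the generic rank of the endomorphism: the dense open stratum consists of endomorphisms pulling back to a dominant self-map of $\widetilde{C}$, the lower strata of those factoring through proper smooth or lower-order subschemes. On each stratum one shows, uniformly for $i$ beyond the conductor, that an endomorphism amounts to a truncated arc on the relevant subscheme plus a choice of ``extra'' coordinates forming an affine space; the truncation order equals $m_i=\mu i+\gamma$, affine in $i$ with $\mu=\mbox{mult}_p(C)$, because by the semigroup condition $\mathfrak{m}_p^{i+1}$ pulls back on each branch to $(u^{\mu_\nu i+\gamma_\nu})$. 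This yields, for all $i\gg 0$, a formula
\[
[\underline{\mbox{End}}_S(X_i)^{\mathrm{red}}]=\sum_j \mathbb{L}^{\,\alpha_j i+\beta_j}\,[\,W_j\cap\mathcal{L}_{m_i}(C_j)\,]
\]
with integers $\alpha_j,\beta_j$, locally closed $W_j$, and subschemes $C_j$ of $C$, all independent of $i$.

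\emph{Step 3 (rationality and assembly).} The motivic generating series $\sum_m[\mathcal{L}_m(C_j)]\mathbb{L}^{-m\dim C_j}t^m$ of jet schemes of a plane curve, and of each locally closed piece appearing above, is rational of the form $p_0(t)\prod_\ell(1-\mathbb{L}^{a_\ell}t^{b_\ell})^{-1}$: this is rationality of the motivic Igusa zeta function (Denef--Loeser), which for a curve is elementary, an embedded resolution being a finite sequence of blow-ups. Finite sums, addition of a polynomial in $\mathcal{G}_k[t]$, the substitution $t\mapsto\mathbb{L}^\alpha t$, and extraction of an arithmetic subprogression $i\mapsto ci+d$ of coefficients (which preserves the underlying linear recurrence over $\mathcal{G}_k$) each send such a rational series to one of the same form. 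Applying these to the formula of Step 2, absorbing the finitely many exceptional small $i$ into a polynomial, and multiplying by $[Y_0]$, gives \eqref{form} for $\zeta_{\mathcal{Y}}(t)$.

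\emph{Main obstacle.} The hard part is Step 2: the uniform-in-$i$ dictionary between $\mathfrak{m}_p$-adic endomorphism spaces and jet schemes, with the affine-bundle fibres and the affine truncation level $m_i$ under control, and all rank strata handled simultaneously. This is precisely where embedding dimension $2$ and the Puiseux/semigroup description of $(C,p)$ enter essentially, and extending it past plane curves is the open problem noted in the introduction.
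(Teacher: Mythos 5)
Your proposal follows essentially the same two-step reduction as the paper: first use the smoothness of $\varphi_i$ to factor out $[Y_0]$ and identify $\zeta_{\mathcal{Y}}(t)$ with $[Y_0]\cdot\zeta_{C,p}(t)$, then invoke the rationality of the auto-Igusa zeta function of the plane-curve germ proved in \cite{stout2}. The only cosmetic difference is in Step 1, where you analyze fibres of the post-composition map $g\mapsto\varphi_i\circ g$ directly, while the paper factors $\varphi_i$ through an \'{e}tale morphism $\psi:Y_i\to\mathbb{A}^{d_i}_{X_i}$ and uses the resulting decomposition of $\underline{\mbox{Hom}}_S(X_i,Y_i)$ --- both routes yield $[\mathcal{A}_i(\mathcal{Y})]=[Y_0]\,\mathbb{L}^{d_i(\ell_i-1)}\,[\underline{\mbox{End}}_S(X_i)^{\mathrm{red}}]$.
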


\begin{proof}
By assumption,  $\varphi_i : Y_i \to X_i$ is smooth, and, by shrinking $Y_i$ if necessary, $\varphi_i$  factors through an \'{e}tale morphism\footnote{The existence of an \'{e}tale morphism $\psi$ follows from the proof of Theorem 8.3.3, page 182 of \cite{stacks} whose argument is taken from the proof Theorem 8.1 of \cite{LL}. In fact, one may generalize by letting $X_i$ can be merely a Deligne-Mumford Stack.} $\psi: Y_i \to \mathbb{A}_{X_i}^{d_i}$. Therefore,
\begin{equation}
\underline{\mbox{Hom}}_S(X_i, Y_i) \cong \underline{\mbox{Hom}}_S(X_i,\mathbb{A}_{X_i}^{d_i})\times_{ \mathbb{A}_{X_i}^{d_i}} Y_i \cong \underline{\mbox{End}}_S(X_i)\times_S \mathbb{A}_S^{d_i(\mathrm{rank}_S(X_i)-1)}\times_SY_i
\end{equation}
Since $(Y_i)^{\mathrm{red}} = (Y_0)^{\mathrm{red}}$, we have
\begin{equation}
[\mathcal{A}_i(\mathcal{Y})]\mathbb{L}^{-d_i(\mathrm{rank}_S(X_i)-1)-e_i(\mathrm{rank}_S(X_i)-1)} = [Y_0][ \underline{\mbox{End}}_S(X_i)]\mathbb{L}^{-e_i(\mathrm{rank}_S(X_i)-1)}
\end{equation}
Multiplying $t^i$ on both sides and summing over $i$ in $\mathcal{G}_k[[t]]$, we have
\begin{equation}\label{coeq}
\zeta_{\mathcal{Y}}(t) = [Y_0]\sum_{i=0}^{\infty} [\underline{\mbox{End}}_S(X_i)]\mathbb{L}^{-e_i(\mathrm{rank}_S(X_i)-1)}t^i.
\end{equation}
Note that then that $\zeta_{\mathcal{Y}}(t) = [Y_0]\zeta_{C,p}(t)$, where $\zeta_{C,p}(t)$ is the auto-Igusa zeta of the algebraic germ $(C,p)$ defined in \cite{stout2}. In loc. cit., we show that this series is of the form displayed in formula \ref{form}.
\end{proof}

Part of the argument in \cite{stout2} relies on rationality results of \cite{DL1} and \cite{DL2} and their later generalizations in \cite{CL1} and \cite{CL2}.
In fact, the first part of this argument holds whenever the morphisms $\varphi_i : Y_i \to X_i$ are smooth with $\{X_i\}_{i\in I}$ any admissible injective system of fat points over any algebraic space $S$ -- i.e., regardless of whether or not $X_i$ is subscheme of a plane curve $C$, formula \ref{coeq} holds provided $\varphi_i : Y_i \to X_i$ is smooth for all $i\in I$. However, as of yet, rationality is only proven for the case when $\mathcal{Y}$ is as in Set-up \ref{setup}. Thus, there are several directions for future results: 

\begin{question}\label{q1}
 Let $\mathcal{Y}$ be a formal deformation such that $\varphi_i: Y_i \to X_i$ are smooth for all $i\in I$ with $\{X_i\}_{i\in I}$ any admissible system of fat points over an algebraic space $S$. Under what conditions on the admissible system $\{X_i\}_{i\in I}$ and on $S$ will $\zeta_{\mathcal{Y}}(t)$ be rational -- i.e., when will $\zeta_{\mathcal{Y}}(t)$ have a similar form as in formula \ref{form}?
\end{question}

\begin{question}\label{q2}
Assuming that the appropriate conditions are placed on $\{X_i\}_{i\in I}$ and $S$ through investigations of the type discussed in Question \ref{q1}, which formal deformations $\mathcal{Y}$ over $\{X_i\}_{i\in I}$ will have a zeta function $\zeta_{\mathcal{Y}}(t)$ of the form given in formula \ref{form}?
\end{question}

\begin{question}\label{q3}
Note that formula \ref{hop} gives a relationship between $\eta_{\mathcal{Y}}(t)$ and $\zeta_{\mathcal{Y}}(t)$ in the most basic case (i.e., they are both equal to the classical motivic Igusa-zeta function). In general, what is the relationship between $\eta_{\mathcal{Y}}(t)$ and $\zeta_{\mathcal{Y}}(t)$? In particular, is there a way to transfer results vis-a-vis Question \ref{q1} and Question \ref{q2} to corresponding statments about $\eta_{\mathcal{Y}}(t)$? In general, if $\zeta_{\mathcal{Y}}(t)$ is rational, must $\eta_{\mathcal{Y}}(t)$ also be rational?
\end{question}

\begin{question}\label{q4}
In the case of Set-up \ref{setup}, there are cases where explicit computations of the auto Igusa-zeta function are possible. Are similar computations possible for the canonical Igusa-zeta function $\eta_{\sY}(t)$?
\end{question}

%    Bibliographies can be prepared with BibTeX using amsplain,
%    amsalpha, or (for "historical" overviews) natbib style.
\bibliographystyle{amsalpha}
%    Insert the bibliography data here.
\nocite{*}

\bibliography{RTE1}

\end{document}